\documentclass{amsart}
\usepackage{graphicx}
\usepackage{latexsym}
\usepackage{hyperref}
\usepackage{amsfonts}
\usepackage[all]{xy}
\setlength{\topmargin}{0.0in}
\setlength{\textheight}{8.5in} \setlength{\textwidth}{5.5in}
\newtheorem{corollary}{Corollary}
\newtheorem{definition}{Definition}
\newtheorem{lemma}{Lemma}

\newtheorem{theorem}{Theorem}
\newtheorem{example}{Example}
\numberwithin{equation}{section}

\newcommand{\la}{\lambda}
\newcommand{\be}{\begin{eqnarray}}
\newcommand{\en}{\end{eqnarray}}
\newcommand{\no}{\nonumber}

\begin{document}
\title[]{On the $h$-Almost Ricci Soliton}
\author{J. N. Gomes$^1$, Qiaoling Wang$^2$}
\address{$^{1}$Departamento de Matem\'{a}tica-UFAM\\
69077-000-Manaus-AM, Brasil} \email{jnvgomes@pq.cnpq.br}
\thanks{$^{1}$Partially supported by PRONEX/CNPq/FAPEAM-BR}
\urladdr{http://www.ufam.edu.br}
\address{$^{2}$Departamento de Matem\'{a}tica-UnB\\
70910-900-Brasilia-DF, Brasil} \email{wang@mat.unb.br}
\thanks{$^{2}$Partially supported by CNPq-BR}
\urladdr{http://www.mat.unb.br}
\author{Changyu Xia$^3$}
\address{$^{3}$Departamento de Matem\'{a}tica-UnB\\
70910-900-Brasilia-DF, Brasil} \email{xia@mat.unb.br}
\thanks{$^{3}$Partially supported by CNPq-BR}
\urladdr{http://www.mat.unb.br}
\keywords{ $m$-quasi-Einstein metric, h-almost Ricci soliton, Scalar curvature, rigidity.}
\subjclass[2000]{Primary 53C25, 53C20, 53C21; Secondary 53C65}
\begin{abstract}
We introduce the concept {\it $h$-almost Ricci soliton} which extends naturally the {\it almost Ricci soliton} by Pigola-Rigoli-Rimoldi-Setti and show that a compact nontrivial $h$-almost Ricci soliton of dimension no less than three with $h$ having defined signal and constant scalar curvature is isometric to a standard sphere with the potential function well determined. We also consider the {\it $h$-Ricci soliton} which is a particular case of the $h$-almost Ricci soliton and a generalization of the {\it Ricci soliton} and give characterizations for a special class of gradient $h$-Ricci solitons.
\end{abstract}

\maketitle

\section{Introduction}

In the late 20th century Hamilton introduced the Ricci flow. More specifically, given a one-parameter family of  metrics $g(t)$ on a Riemannian manifold
$M^n$, defined on an interval $ I \subset \mathbb{R} $, denoting by $Ric_{g(t)}$ the Ricci tensor of the metric $g(t)$, the equation of Ricci flow is
\begin{equation}\label{eq1}
\frac{\partial}{\partial t}g(t)=-2Ric_{g(t)}.
\end{equation}

In \cite{hamilton1} Hamilton proved that for any smooth metric $g_0$ on a compact Riemannian manifold $M^n$, there exists a unique solution $g(t)$ to the equation (\ref{eq1}) defined on some interval $[0, \varepsilon)$, $\varepsilon> 0$, with $g(0)=g_0$. For the complete non-compact case, Wan-Xiong Shi proved in \cite{Shi} the existence of a complete solution of \eqref{eq1} under the condition that the sectional curvatures of $(M^n, g_0)$ are bounded.

A Ricci soliton  is a Ricci flow $(M^n, g(t))$, $0\leq t <T \leq +\infty,$ with the property that for each $ t\in [0, T)$, there exists a diffeomorphism
$\varphi_t: M^n\to M^n$ and a constant $\sigma(t)$ such that $\sigma(t)\varphi_t^*g_0= g(t)$. One way to generate Ricci solitons is as follows. Consider a  Riemannian manifold $(M^n, g_0)$ with a vector field $X$ and a constant $\lambda$, satisfying
\begin{equation}\label{eqfund1}
Ric_{g_0}+\frac{1}{2}\mathcal{L}_{X}g_0=\lambda g_0,
\end{equation}
where $\mathcal{L}_{X}g_0$ denotes the Lie derivative of $g_0$ with respect to $X$. Let us set $T:=\infty$, if $\lambda\leq0$, and
$T:=\frac{1}{2\lambda}$ if $\lambda>0$. Then, we define a function $\sigma (t)=-2\lambda t+1$, $t\in [0, T)$, and a vector field $Y\in{\mathfrak X}(M^n)$ by
$Y(x)=\frac{X(x)}{\sigma(t)}$,  $x\in M^n$, and finally, just let $\varphi_t$ be the one-parameter family of diffeomorphisms generated by $Y$. This characterization allows some authors to consider the equation \eqref{eqfund1} as the definition of Ricci soliton. For more details on  Ricci flow we refer the reader to \cite{Chow}.

Following the same line of defining Ricci soliton, it is natural to analyze the following situation: Let $(M^n, g_0)$ be a complete Riemannian manifold and
$g(t)$ be a solution of \eqref{eq1} defined on an interval $[0, \varepsilon)$, $\varepsilon >0$, such that $\varphi_t$ is a one-parameter family of  diffeomorphisms of $M^n$, with $\varphi_0 =id_M$ and $g(t)(x)=\tau(x,t)\varphi_t ^*g_0(x)$ for every $x\in M^n$, where $\tau(x,t)$ is a positive smooth  function on
$M^n\times [0,\varepsilon)$. In this case, we have
\begin{equation*}
\frac{\partial}{\partial t}g(t)(x)=\frac{\partial}{\partial t}\tau(x,t)\varphi_t^*g_0(x)+\tau(x,t)\varphi_t^*\mathcal{L}_{\frac{\partial}{\partial t}\varphi(x,t)}g_0(x).
\end{equation*}
Since $\varphi_0 =id_M$ and $g(0)=g_0$, we know that $\tau(x,0)=1$. It then follows from the above equation that
\begin{equation*}
Ric_{g_0}+ \frac{1}{2}\mathcal{L}_Xg_0=\lambda g_0,
\end{equation*}
where $\lambda(x)=-\frac{1}{2}\frac{\partial}{\partial t}\tau(x,0)$ and $X=\frac{\partial}{\partial t}\varphi(x,0),$ which motivates the definition of Almost Ricci Soliton given by the authors in \cite{br2,prrs}. However, we will see that some important geometric properties appear when we consider the  following definition.
\begin{definition}
An $h$-almost Ricci soliton is a complete Riemannian  manifold $(M^n,g)$ with a vector field $X\in\mathfrak{X}(M)$, a soliton function
$\lambda: M\to \Bbb{R}$ and a function $h:M\to\Bbb{R}$ which are smooth and satisfy the equation:
\begin{equation} \label{eqfund1h}
Ric_{g}+ \frac{h}{2}\mathcal{L}_Xg=\lambda g.
\end{equation}
\end{definition}
For convenience's sake we denote by $(M^n, g, X, h, \lambda)$ an $h$-almost Ricci soliton. In the case $\lambda$ is constant we simply say that it is an $h$-{\it Ricci soliton}.
When $\mathcal{L}_Xg=\mathcal{L}_{\nabla u}g$ for some smooth function $u:M\to\Bbb{R}$, we call $(M^n,g,\nabla u,h,\lambda)$ a gradient $h$-almost  Ricci soliton with \emph{potential function} $u$. In this case, the fundamental equation \eqref{eqfund1h} can be rewritten as
\begin{equation}\label{eqfund2h}
Ric+h\nabla^2 u=\lambda g,
\end{equation}
where $\nabla^2 u$ denotes the Hessian of $u$.

It should be mentioned that the latter case arises naturally in the warped product metrics. To see this, we use the same notation and terminology as in Barret O'Neill \cite{oneill}. So let us consider the warped product manifold $(B\times_f\Bbb{F},g)$, with $g=g_B+f^2\langle,\rangle$. One can check that it's Ricci tensor satisfies the following equations :
\begin{itemize}
\item [(i)] $Ric(X,Y)=Ric_B(X,Y)-\frac{m}{f}\nabla^2f(X,Y)$,
\item [(ii)] $Ric(X,V)=0$,
\item [(iii)] $Ric(V,W)=Ric_{\Bbb{F}}(V,W)- \Big(\frac{\Delta f}{f}+\frac{|\nabla f|^2}{f^2}(m-1)\Big)g(V,W)$,
\end{itemize}
for all horizontal vectors $X,Y$ and vertical vectors $V,W$, where $\nabla^2 f$ denotes the Hessian of a smooth function $f$ on the Riemannian manifold $(B,g_B)$, $m>1$ is the dimension of the Riemannian manifold $(\Bbb{F},\langle,\rangle)$. Thus, if the Ricci tensor satisfies  $Ric(X,Y)=\lambda g_B(X,Y)$, for all $X,Y\in\mathfrak{X}(B)$ and for some smooth function $\lambda$ on $B$, then we get a  gradient $(-\frac{m}{f})$-almost Ricci soliton $(B,g_B,\nabla f,-\frac{m}{f},\lambda)$. Also, if $B\times_f\Bbb{F}$ is Einstein, then the fiber $(\Bbb{F},\langle,\rangle)$ is automatically  Einstein. Conversely, if $(\Bbb{F},\langle,\rangle)$ is Einstein with $Ric_{\Bbb{F}}=\mu \langle,\rangle$, it then follows from (i), (ii) and (iii) that  the warped product manifold $B\times_{f}\Bbb{F}$ is Einstein with $Ric=\lambda g$ if and only if $(B,g_B)$ is a gradient $(-\frac{m}{f})$-Ricci soliton with potential function $f$ and soliton function $\lambda$ satisfying
\begin{equation}\label{RicWP}
\mu =f\Delta f+(m-1)\vert\nabla f\vert^{2}+\lambda f^{2}.
\end{equation}
This characterization is especially important as we shall see in Theorem \ref{thmConstEin}.  Due to the close connection with this fact, we recommend that the reader could also see the works in \cite{kim,case1}.

Let us say that an $h$-almost Ricci soliton is \emph{expanding}, \emph{steady} or \emph{shrinking} if $ \lambda$ is respectively negative, zero or positive on $M$ and  that it is \emph{undefined} if $\lambda$ has no definite sign. When $X$ is a homothetic conformal  vector field, that is,  $\mathcal{L}_Xg=c g$, for some constant $c$, $(M^n, g, X, h, \lambda)$  is said to be \emph{trivial}. Otherwise it is \emph{nontrivial}. Observe that the traditional Ricci soliton is  a $1$-Ricci soliton with constant $\lambda$.
Moreover, $1$-almost Ricci soliton is just the  almost Ricci soliton, whose geometry was first studied in \cite{prrs} where the authors  proved some existence results for  almost gradient Ricci solitons. Later, some  structural equations for
the  almost Ricci solitons were presented in \cite{br2} which resulted in several studies on the geometry of almost Ricci solitons (Cf. \cite{bgr,br2}).

In \cite{master} Maschler studied the equation \eqref{eqfund2h} free of our motivation and he referred to equation \eqref{eqfund2h} as Ricci-Hessian equation. Furthermore, we note that the Ricci-Hessian equation is related to a new class of Riemannian metrics introduced by Catino \cite{catino} which are natural generalizations of the Einstein metrics. More precisely, he called a Riemannaina manifold $(M^n, g) $ with $ n\geq 2$,  a generalized quasi-Einstein manifold, if there are smooth functions $f$, $\lambda$ and $\mu$ on $M$ satisfying
\begin{equation}\label{eqfund QEM}
Ric+\nabla^2f-\mu df\otimes df=\lambda g.
\end{equation}
When $\mu=\frac{1}{m}$, where $m$ is a positive integer, the above generalized quasi-Einstein manifold is called a generalized $m$-quasi-Einstein manifold (Cf. \cite{br3}) which will be denoted by  $\{M^n,g,\nabla f,\lambda,m\}$, and simply $m$-quasi-Einstein manifold when $\lambda$ is constant. It has been proved  in \cite{kim,case1} that $m$-quasi-Einstein manifolds are directly related to the warped product Einstein manifolds, and in \cite{case1} that any compact $m$-quasi-Einstein manifold with scalar curvature is trivial which means that $f$ is a constant. However, Barros and Ribeiro \cite{br3} presented a family of nontrivial generalized $m$-quasi-Einstein metrics on a Euclidean unit sphere $\mathbb{S}^n(1)$ that are rigid in the class of constant scalar curvature (see \cite{bjng}). Namely, they showed that a nontrivial compact generalized  $m$-quasi-Einstein manifold $\{M^n,g,\nabla f,\lambda,m\}$ with and constant scalar curvature is isometric to a standard Euclidean sphere $\Bbb{S}^{n}(r),$ and up to constant, $f=-m\ln(\tau-\frac{h_v}{n})$, where $\tau\in(\frac{1}{n},+\infty)$ is a real number and $h_v$ is the height function with respect to a fixed unit vector $v\in\mathbb{R}^{n+1}$, which is part of the family presented in \cite{br3}.

Returning to \eqref{eqfund QEM}, with $\mu=-\frac 1m$, we consider a nonconstant function $u=e^{\frac{f}{m}}.$ It is easy to see that
\begin{equation}\label{uandf}
\nabla u=\frac{1}{m}e^{\frac{f}{m}}\nabla f \ \ \ \mbox{and}\ \ \ \frac{m}{u}\nabla^2 u=\nabla^2f+\frac{1}{m}df\otimes df.
\end{equation}
Consequently, the equation \eqref{eqfund QEM} can be rewritten as
\begin{equation}\label{eqfund22}
Ric+\frac{m}{u}\nabla^2u=\lambda g.
\end{equation}
Therefore, every generalized quasi-Einstein manifold, with $\mu=-\frac{1}{m}$, is a gradient $\frac{m}{u}$-almost  Ricci soliton. In particular, interchanging $m$ by $-m$ in the calculations of \eqref{uandf}-\eqref{eqfund22} we conclude that all generalized $m$-quasi-Einstein metrics is a gradient $(-\frac{m}{u})$-almost Ricci soliton.

For what follows we assume that $M^n$ has dimension $n\geq3$ and say that $h$ has defined signal if either $h>0$ on $M$ or $h<0$ on $M$. In this paper, we will prove that $h$-almost Ricci solitons, with $h$ having defined signal, are rigid in the class of compact manifolds with constant scalar curvature. Namely, we have the following result.

\begin{theorem}\label{thmA}
A compact nontrivial $h$-almost Ricci soliton $(M^n, g, X, h, \la )$ of constant scalar curvature with $h$ having defined signal is isometric to a standard sphere ${\mathbb S}^n(r)$. Moreover, it is gradient  and the potential function is an eigenfunction corresponding to the  first eigenvalue of ${\mathbb S}^n(r)$.
\end{theorem}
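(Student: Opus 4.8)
The plan is to show first that the soliton field $X$ must in fact be conformal and that $(M^n,g)$ is Einstein, and only then to invoke an Obata--type rigidity to recognize the round sphere. Write $R$ for the (constant) scalar curvature and introduce the trace-free tensors $\Phi:=Ric-\frac{R}{n}g$ and $\Theta:=\frac12\mathcal L_Xg-\frac{\operatorname{div}X}{n}g$, the latter being the trace-free part of the deformation tensor $\frac12\mathcal L_Xg$. Tracing the fundamental equation \eqref{eqfund1h} gives $R+h\operatorname{div}X=n\lambda$, so that $\lambda-\frac{R}{n}=\frac{h\operatorname{div}X}{n}$; subtracting $\frac{R}{n}g$ from \eqref{eqfund1h} and inserting this relation yields the pointwise identity
\begin{equation*}
\Phi=-h\,\Theta ,
\end{equation*}
and in particular $\langle\Phi,\Theta\rangle=-h\,|\Theta|^{2}$ at every point of $M$.

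The decisive step is an integral identity that uses constant scalar curvature in an essential way. By the contracted second Bianchi identity $\operatorname{div}(Ric)=\frac12\nabla R$, so $\operatorname{div}\Phi=\frac12\nabla R-\frac1n\nabla R=0$ because $R$ is constant. Since $\Phi$ is symmetric and trace-free, $\langle\Phi,\Theta\rangle=\langle\Phi,\nabla X\rangle$, and hence
\begin{equation*}
\langle\Phi,\Theta\rangle=\operatorname{div}\big(\Phi(X)\big)-\langle\operatorname{div}\Phi,\,X\rangle=\operatorname{div}\big(\Phi(X)\big),
\end{equation*}
where $\Phi(X)$ denotes the vector field dual to $\Phi(X,\cdot)$. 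Thus $\langle\Phi,\Theta\rangle$ is a pure divergence. Integrating over the compact manifold and using the pointwise identity gives $\int_M h\,|\Theta|^{2}\,dM=0$. Because $h$ has defined signal it never vanishes and the integrand keeps a fixed sign, forcing $\Theta\equiv0$ and therefore $\Phi\equiv0$. Consequently $X$ is a conformal vector field, $\frac12\mathcal L_Xg=\psi g$ with $\psi=\frac{\operatorname{div}X}{n}$, and $(M^n,g)$ is Einstein.

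To pin down the sphere I would argue as follows. If $\psi$ were constant, $X$ would be homothetic and the soliton trivial; since it is nontrivial, $\psi$ is nonconstant and $X$ is a genuine nonhomothetic conformal field. Feeding $\frac12\mathcal L_Xg=\psi g$ into the standard identity $\mathcal L_X Ric=-(n-2)\nabla^2\psi-(\Delta\psi)g$ for conformal fields, together with $Ric=\frac{R}{n}g$ (so $\mathcal L_X Ric=\frac{2R\psi}{n}g$), a trace computation gives $\Delta\psi=-\frac{R}{n-1}\psi$ and then, for $n\ge3$,
\begin{equation*}
\nabla^2\psi=-\frac{R}{n(n-1)}\,\psi\,g .
\end{equation*}
Integration by parts against $\psi$ rules out $R\le0$ (which would force $\psi$ constant), so this is an Obata equation with positive constant $c=\frac{R}{n(n-1)}$. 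Obata's theorem then gives $(M^n,g)\cong\mathbb S^{n}(r)$ with $r=1/\sqrt c$, and identifies $\psi$ as an eigenfunction for the first eigenvalue $\lambda_1=nc=\frac{n}{r^{2}}$. Finally, setting $u:=-\psi/c$ one checks $\nabla^2u=\psi g=\frac12\mathcal L_Xg$, whence $\mathcal L_{\nabla u}g=\mathcal L_Xg$: the soliton is gradient with potential function $u$, itself a first eigenfunction (any Killing part of $X$ is invisible to $\mathcal L_Xg$ and may be discarded).

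I expect the main obstacle to lie in the passage from ``nonhomothetic conformal field on an Einstein manifold'' to the Obata equation for $\psi$: it rests on the correct conformal identities, a careful trace computation, and the exclusion of $R\le0$, after which one may instead simply cite the Lichnerowicz--Obata--Nagano--Yano characterization of compact Einstein manifolds admitting such a field. The genuinely novel input, by contrast, is the elementary but decisive observation that constant scalar curvature turns $\langle\Phi,\Theta\rangle$ into a divergence; the defined signal of $h$ then converts the vanishing integral into the pointwise rigidity $\Theta\equiv0$, which drives the whole argument.
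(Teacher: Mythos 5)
Your proposal is correct and takes essentially the same route as the paper: the same pointwise identity $\overset\circ{Ric}=-h\overset\circ{S}$ between the trace-free tensors, the same observation that (with $\mathrm{div}\,\overset\circ{Ric}=0$ from constant $R$) the quantity $\langle\overset\circ{Ric},\overset\circ{S}\rangle=-h|\overset\circ{S}|^2$ is the divergence of $\overset\circ{Ric}(X)$, hence integrates to zero and forces $\overset\circ{S}\equiv 0$, followed by the Obata equation $\nabla^2\rho=-\frac{R}{n(n-1)}\rho\, g$ for the conformal factor, the positivity of $R$ via the eigenvalue argument, Obata's theorem, and the identical potential $u=-\frac{n(n-1)}{R}\rho$. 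The only cosmetic differences are that you derive the conformal-factor equation from the identity for $\mathcal L_X Ric$ rather than citing Yano, and that the paper retains the term $\frac{n-2}{2n}\langle\nabla R,X\rangle$ so as to prove the result under the weaker hypothesis that $\langle X,\nabla R\rangle$ has a sign opposite to that of $h$.
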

The next result  gives strong restrictions to a special class of gradient $h$-Ricci solitons.
\begin{theorem}\label{thmConstEin}
Let $u$ be a positive function on $M^n$ and $m$ a nonzero constant.
\begin{itemize}
\item [(i)] For a gradient $(-\frac{m}{u})$-Ricci soliton $(M^n, g, \nabla u, -\frac{m}{u},\la)$, the following is valid
\begin{equation}\label{eqthmCE}
\lambda u^2 + u\Delta u + (m-1)|\nabla u|^2 = \mu,
\end{equation}
for some constant $\mu$.
\item [(ii)] A compact  steady or expanding  gradient $(-\frac{m}{u})$-Ricci soliton $(M^n, g, \nabla u, -\frac{m}{u},\la)$ is trivial.
\end{itemize}
\end{theorem}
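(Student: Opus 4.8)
The plan is to handle (i) by producing a first integral out of the soliton equation together with the contracted second Bianchi identity, and then to deduce (ii) by feeding that first integral into the maximum principle on the compact manifold $M$. The crucial structural fact that makes (i) work is that $\lambda$ is \emph{constant} (this is an $h$-Ricci soliton, not merely an $h$-almost one), so that $\operatorname{div}(\lambda g)=0$; otherwise an extra $\nabla\lambda$ term would spoil the exactness below.

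For (i), I would start from the fundamental equation in the form $Ric=\lambda g+\frac{m}{u}\nabla^2 u$ and take its trace, obtaining $R=n\lambda+\frac{m}{u}\Delta u$ for the scalar curvature $R$. Next I would compute the divergence of the symmetric tensor $\frac{m}{u}\nabla^2 u$, using the commutation formula $\operatorname{div}(\nabla^2 u)=\nabla(\Delta u)+Ric(\nabla u)$ and the identity $\nabla^2 u(\nabla u,\cdot)=\frac12\nabla|\nabla u|^2$. Substituting the soliton equation to rewrite $Ric(\nabla u)$ as $\lambda\nabla u+\frac{m}{2u}\nabla|\nabla u|^2$, and comparing the outcome with $\frac12\nabla R$ furnished by the contracted Bianchi identity $\operatorname{div}(Ric)=\frac12\nabla R$, I expect all terms to reorganize into a single exact form. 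Concretely, the two expressions for $\frac12\nabla R$ should collapse to $\nabla\bigl(\lambda u^2+u\Delta u+(m-1)|\nabla u|^2\bigr)=0$, which is precisely \eqref{eqthmCE} with $\mu$ the integration constant. I expect the main difficulty here to be purely bookkeeping: keeping the powers $u^{-1}$ and $u^{-2}$ in order so that the non-exact pieces cancel exactly.

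For (ii), set $G:=\lambda u^2+u\Delta u+(m-1)|\nabla u|^2\equiv\mu$. Since $M$ is compact, $u$ attains its maximum at some $p$ and its minimum at some $q$, where $\nabla u$ vanishes and $\Delta u(p)\le 0\le\Delta u(q)$. Evaluating $G=\mu$ at $p$ and $q$ and subtracting gives $\lambda\bigl(u(p)^2-u(q)^2\bigr)+u(p)\Delta u(p)-u(q)\Delta u(q)=0$. In the expanding or steady case $\lambda\le 0$, and since $u>0$ and $u(p)\ge u(q)$, all three summands are nonpositive, so each must vanish. When $\lambda<0$ the first term forces $u(p)=u(q)$, hence $u$ is constant. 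When $\lambda=0$ the same relations give $\mu=0$, so $G=u\Delta u+(m-1)|\nabla u|^2=0$ pointwise, whence $\Delta(u^2)=2u\Delta u+2|\nabla u|^2=-2(m-2)|\nabla u|^2$ has a fixed sign; integrating over compact $M$ forces $\nabla u\equiv 0$. In either case $u$ is constant, so $\mathcal{L}_{\nabla u}g=0$ and the soliton is trivial. The one delicate point in this part is the degenerate value $m=2$ in the steady case, where the integrand above vanishes identically; this must instead be resolved by observing that $u^2$ is then harmonic on the compact manifold $M$ and therefore constant.
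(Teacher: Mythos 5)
Your proposal is correct and follows essentially the same route as the paper: part (i) is the paper's divergence computation (contracted Bianchi identity, $\mathrm{div}\,\nabla^2 u = Ric(\nabla u,\cdot)+d\Delta u$, and $\nabla^2 u(\nabla u,\cdot)=\tfrac12 d|\nabla u|^2$), which the paper packages as a lemma valid for nonconstant $\lambda$ before specializing, and part (ii) matches the paper's max/min argument, including the conclusion $\mu=0$ when $\lambda=0$, the integration yielding $(m-2)\int_M|\nabla u|^2=0$, and the resolution of the degenerate case $m=2$ via $\tfrac12\Delta u^2=u\Delta u+|\nabla u|^2=0$, i.e.\ harmonicity of $u^2$ on the compact manifold.
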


By taking $m$ to be an integer no less than $2$ in Theorem \ref{thmConstEin} and using the  O'Neill's formulas above, we can construct warped product  Einstein metrics as follows.

\begin{corollary}
Let $(B,g_B,\nabla u,-\frac{m}{u},\lambda)$ be a  gradient $(-\frac{m}{u})$-Ricci soliton, $\mu$ a constant satisfying \eqref{eqthmCE} and $(\mathbb{F}^m,\langle,\rangle)$ an $m$-dimensional Riemannian manifold with $m>1$ and Ricci tensor $Ric_{\Bbb{F}}=\mu\langle,\rangle$.  When $B$ is compact, we assume that $\lambda$ is positive.   Then, the warped product manifold $(B\times_u\Bbb{F}^m, g)$, with $g=g_B+ u^2\langle,\rangle$,  is Einstein with Ricci tensor $Ric=\lambda g$.
\end{corollary}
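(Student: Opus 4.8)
The plan is to verify the Einstein condition $Ric = \lambda g$ pointwise on $B\times_u\mathbb{F}^m$ by evaluating the Ricci tensor on the three types of pairs of vectors---horizontal, mixed, and vertical---using O'Neill's formulas (i), (ii), (iii) with warping function $f=u$ and fiber dimension $m$. Before doing so I would invoke Theorem \ref{thmConstEin}(i), which guarantees that $\mu := \lambda u^2 + u\Delta u + (m-1)|\nabla u|^2$ is a genuine constant; this is precisely what makes the hypothesis that $(\mathbb{F}^m,\langle,\rangle)$ be Einstein with $Ric_{\mathbb{F}} = \mu\langle,\rangle$ meaningful, since a fiber can only be Einstein with respect to a fixed constant.

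For horizontal $X,Y$, formula (i) reads $Ric(X,Y) = Ric_B(X,Y) - \frac{m}{u}\nabla^2 u(X,Y)$, so the gradient $(-\frac{m}{u})$-Ricci soliton equation $Ric_B - \frac{m}{u}\nabla^2 u = \lambda g_B$ immediately gives $Ric(X,Y) = \lambda g_B(X,Y) = \lambda g(X,Y)$, using that $g = g_B$ on horizontal vectors. For a horizontal $X$ and a vertical $V$, formula (ii) gives $Ric(X,V) = 0 = \lambda g(X,V)$, since horizontal and vertical vectors are $g$-orthogonal.

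The only case requiring a short computation is the vertical one. For vertical $V,W$, formula (iii) gives
\begin{equation*}
Ric(V,W) = Ric_{\mathbb{F}}(V,W) - \Big(\frac{\Delta u}{u} + \frac{|\nabla u|^2}{u^2}(m-1)\Big)g(V,W).
\end{equation*}
Substituting $Ric_{\mathbb{F}} = \mu\langle,\rangle$ and using that $g = u^2\langle,\rangle$ on the fiber, hence $\langle V,W\rangle = u^{-2}g(V,W)$, the identity $Ric(V,W) = \lambda g(V,W)$ is equivalent, after multiplying through by $u^2$, to $\mu = \lambda u^2 + u\Delta u + (m-1)|\nabla u|^2$, which is exactly \eqref{eqthmCE}. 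Thus $Ric = \lambda g$ on all three types of pairs, and by bilinearity $Ric = \lambda g$ on $B\times_u\mathbb{F}^m$, proving the claim.

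The argument is essentially bookkeeping, and there is no serious obstacle once O'Neill's formulas are available; the only point demanding care is the conformal factor $u^2$ relating $\langle,\rangle$ and $g$ on the fiber, which is what turns the fiber's Einstein constant $\mu$ into the left-hand side of \eqref{eqthmCE}. Finally I would remark that the hypothesis that $\lambda$ be positive when $B$ is compact is used only to guarantee nontriviality: by Theorem \ref{thmConstEin}(ii) a compact steady or expanding soliton forces $u$ to be constant, which would collapse the warped product into a Riemannian product, so positivity of $\lambda$ keeps $u$ nonconstant and the resulting Einstein metric a genuine warped product.
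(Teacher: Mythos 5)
Your proof is correct and takes essentially the same route as the paper: the authors derive exactly your three-case O'Neill computation in the introduction, where the equivalence ``$B\times_f\Bbb{F}$ Einstein $\Leftrightarrow$ gradient $(-\frac{m}{f})$-Ricci soliton satisfying \eqref{RicWP}'' is established, and the corollary then follows by combining this with Theorem \ref{thmConstEin}(i), which guarantees $\mu$ is constant --- precisely the point you flag. Your closing remark on the role of the hypothesis $\lambda>0$ for compact $B$ (ruling out, via Theorem \ref{thmConstEin}(ii), the degenerate case of constant $u$) also matches the paper's intent.
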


\section{Preliminaries and  Proofs of Theorems \ref{thmA} and \ref{thmConstEin} }

Firstly, let us list a lemma which is crucial for the proof of our result. Recall that the divergence of a $(1,1)$-tensor $T$ on a Riemannian manifold
$(M, g)$ is the $(0,1)$-tensor given by
\begin{equation}\label{divT}
(\mathrm{div} T)(u)(p)= \mathrm{tr}\left(w\mapsto (\nabla_wT)(u)(p)\right),
\end{equation}
where $p\in M$ and $u,w\in T_pM$. If $T$ is a $(0,2)$-tensor on $M$, one can associate with $T$ a unique $(1,1)$-tensor,
also denoted by $T$, according to
\begin{equation}
g(T(Z), Y)=T(Z,Y),
\end{equation}
for all $Y, Z \in \mathfrak{X}(M)$. Thus, we get
\begin{eqnarray}\label{FG}
\nonumber \mathrm{div}(\varphi T)&=&\varphi \mathrm{div} T+ T(\nabla\varphi,\cdot)\\
\nonumber \nabla(\varphi T)&=&\varphi\nabla T+d\varphi\otimes T\\
\frac{1}{2}d|\nabla\varphi|^2&=&\nabla^2\varphi(\nabla\varphi,\cdot)\\
\nonumber \mathrm{div}\nabla^2\varphi&=&Ric(\nabla\varphi,\cdot)+ d\Delta\varphi
\end{eqnarray}
for all $\varphi\in C^\infty(M).$ In particular, we have $\mathrm{div}(\varphi g)=d\varphi$ and the second contracted Bianchi identity:  $\frac{1}{2}d(\mathrm{tr} Ric)=\mathrm{div} Ric$.

In fact, the first three equations can be proved directly from  definitions, while the fourth one is a general fact already known in the literature, which can be obtained directly from \eqref{divT} and the definition of $\nabla^2\varphi$.

The next lemma enables  us to use the techniques by Barros and the first autor \cite{bjng}.
\begin{lemma}\label{lem1}
Let $T$ be a symmetric $(0,2)$-tensor on a  Riemanniana manifold $(M,g)$. Then we have
\be
\mathrm{div} (T(\varphi Z))= \varphi(\mathrm{div} T)(Z)+ \varphi\langle \nabla Z, T\rangle + T(\nabla\varphi ,Z),
\en
for each $Z\in\mathfrak{X}(M)$ and any smooth function  $\,\varphi$ on $M$.
\end{lemma}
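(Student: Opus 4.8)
The plan is to compute the divergence directly in a local orthonormal frame $\{e_i\}_{i=1}^n$, using $\mathrm{div}(W)=\sum_i g(\nabla_{e_i}W, e_i)$ for a vector field $W$, applied to $W=T(\varphi Z)=\varphi\,T(Z)$, where $T$ now denotes the associated $(1,1)$-tensor (which acts linearly over $C^\infty(M)$, so that $T(\varphi Z)=\varphi\,T(Z)$). First I would apply the Leibniz rule $\nabla_{e_i}(\varphi T(Z))=(e_i\varphi)\,T(Z)+\varphi\,\nabla_{e_i}(T(Z))$ and pair with $e_i$. Summing over $i$, the first group of terms produces $\sum_i (e_i\varphi)\,g(T(Z), e_i)=\sum_i (e_i\varphi)\,T(Z, e_i)$; recognizing $\sum_i (e_i\varphi)e_i=\nabla\varphi$ and invoking the symmetry of $T$, this collapses to $T(\nabla\varphi, Z)$, which is precisely the last term of the claimed identity.

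Next I would treat the remaining factor $\varphi\,\mathrm{div}(T(Z))$ by splitting once more via the Leibniz rule for the $(1,1)$-tensor, $\nabla_{e_i}(T(Z))=(\nabla_{e_i}T)(Z)+T(\nabla_{e_i}Z)$. Pairing with $e_i$ and summing, the first piece $\sum_i g((\nabla_{e_i}T)(Z), e_i)$ is exactly $(\mathrm{div}T)(Z)$ by the definition \eqref{divT}, while the second piece $\sum_i g(T(\nabla_{e_i}Z), e_i)$, after using the symmetry of $T$ to rewrite it as $\sum_i g(\nabla_{e_i}Z, T(e_i))$, is by definition the full contraction $\langle \nabla Z, T\rangle$ of the $(1,1)$-tensors $\nabla Z$ and $T$. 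Combining the three contributions yields the asserted formula.

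There is no analytic difficulty here; the computation is purely tensorial and frame-independent. The only point demanding care, and the step I would double-check most carefully, is the consistent bookkeeping between the $(0,2)$- and $(1,1)$-incarnations of $T$, together with the precise placement of the symmetry of $T$: it is needed both to fold the gradient term into $T(\nabla\varphi, Z)$ and to identify the trace $\sum_i g(T(\nabla_{e_i}Z), e_i)$ with the inner product $\langle \nabla Z, T\rangle$ rather than with some other contraction. Choosing a frame that is geodesic at the point of evaluation (so that $\nabla_{e_i}e_j=0$ there) would streamline the intermediate pairings, although it is not strictly necessary and the identity is in any case pointwise and independent of the frame.
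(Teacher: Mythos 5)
Your computation is correct, and in fact the paper states Lemma~\ref{lem1} without any proof at all (the authors only remark that the identities in \eqref{FG} ``can be proved directly from definitions''), so your orthonormal-frame verification supplies exactly the argument the paper leaves implicit. All three contributions come out right: $T(\varphi Z)=\varphi T(Z)$ by $C^\infty$-linearity of the $(1,1)$-incarnation, the term $\sum_i(e_i\varphi)\,g(T(Z),e_i)=T(Z,\nabla\varphi)=T(\nabla\varphi,Z)$ uses symmetry where you say it does, $\sum_i g\bigl((\nabla_{e_i}T)(Z),e_i\bigr)=(\mathrm{div}\,T)(Z)$ matches the paper's definition \eqref{divT}, and $\sum_i g\bigl(\nabla_{e_i}Z,T(e_i)\bigr)$ is indeed the Hilbert--Schmidt pairing $\langle\nabla Z,T\rangle$ once self-adjointness of $T$ is invoked. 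It is worth noting a slightly shorter route that stays closer to the paper's own toolkit: since $T(\varphi Z)=(\varphi T)(Z)$, first establish the $\varphi\equiv 1$ case $\mathrm{div}(T(Z))=(\mathrm{div}\,T)(Z)+\langle\nabla Z,T\rangle$ (the second half of your computation), then apply it to the tensor $\varphi T$ and use the first identity of \eqref{FG}, $\mathrm{div}(\varphi T)=\varphi\,\mathrm{div}\,T+T(\nabla\varphi,\cdot)$, together with $\langle\nabla Z,\varphi T\rangle=\varphi\langle\nabla Z,T\rangle$; this yields the lemma in one line and makes transparent why the authors placed it immediately after \eqref{FG}. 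Your closing caveats are well placed: the only real pitfall is the $(0,2)$-versus-$(1,1)$ bookkeeping, and your geodesic-frame remark is correct though, as you say, unnecessary.
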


Now we consider $(\Bbb{M}^n(c),g_0)$, a simply connected Riemannian manifold of constant sectional curvature $c\in\{-1,1\}.$
Let us denote by $\Bbb{R}^{n+1}_\nu$, $\nu\in\{0,1\}$, the vector space $\Bbb{R}^{n+1}$ endowed with the inner product $\langle\,,\,\rangle$ given by the standard way:
$$\langle\,v,w\,\rangle=\displaystyle\sum_{i=1}^{n}v_iw_i+(-1)^{\nu}v_{n+1}w_{n+1},$$
where $v=(v_1,\ldots,v_{n+1})$ and  $w=(w_1,\ldots,w_{n+1})$ are elements of $\Bbb{R}^{n+1}$. With this setting the standard sphere $(\Bbb{S}^n(1),g_0)$ is defined by
$$\Bbb{S}^n(1)=\{p\in \Bbb{R}^{n+1}_{0}; \langle p,p\rangle=1\}$$
while the standard hyperbolic space $(\Bbb{H}^n(-1),g_0)$ is given by
$$\Bbb{H}^n(-1)=\{p\in \Bbb{R}^{n+1}_{1}; \langle p,p\rangle=-1, p_{n+1}\geq1\},$$which is a spacelike hypersurface  of $\Bbb{R}^{n+1}_1$, i.e., the inner product $\langle\,,\,\rangle$  restricted to $\Bbb{H}^n(-1)$ is a Riemannian metric $g_0$. Following the same idea of \cite{br3} we have:

\begin{example}\label{ex2} Let $h_v$ be a height function with respect to a fixed unit vector $v\in\Bbb{R}^{n+1}$.
The quadruple $(\Bbb{M}^n(c),g_0,\nabla u,\frac{m}{u},\lambda),$ where $u=e^{\frac{f}{m}}$, $f=m\ln(\tau-c\frac{h_v}{n})$, $\tau$ is a real number such that $f$ is a nonconstant real function and $\la=c(n-1)+\frac{mc^2}{n\tau-ch_v}h_v,$ is a nontrivial structure of gradient $\frac{m}{u}$-almost Ricci soliton on $\Bbb{M}^n(c)$.
\end{example}
In fact, since $df=-\frac{mc}{n\tau-ch_v}dh_v$ and $\nabla^2h_v=-ch_v g_0$ we have
\begin{eqnarray*}
\nabla^2f = \nabla df &=& -\frac{mc}{n\tau-ch_v}\nabla dh_v - d\big(\frac{mc}{n\tau-ch_v}\big)\otimes dh_v\\
&=& -\frac{mc}{n\tau-ch_v}\nabla^2h_v - \frac{mc^2}{(n\tau-ch_v)^2}dh_v\otimes dh_v\\
&=& \frac{mc^2}{n\tau-ch_v}h_vg_0 - \frac{mc^2}{(n\tau-ch_v)^2}dh_v\otimes dh_v.
\end{eqnarray*}
Hence,
\be\no
\frac{m}{u}\nabla^2 u=\nabla^2f +\frac{1}{m}df\otimes df= \frac{mc^2}{n\tau-ch_v}h_vg_0.
\en
Since $Ric= c(n-1)g_0$, we have
\begin{equation*}
Ric + \frac{m}{u}\nabla^2 u = \big( c(n-1)+\frac{mc^2}{n\tau-ch_v}h_v \big)g_0.
\end{equation*}
As we had stated.

Analogously, consider $u=e^{\frac{f}{m}}$, where $f(x)=m\ln(\tau+|x|^2)$ and $\tau$ is a real number such that $f$ is a nonconstant real function and let $g_0$ be the canonical metric on $\mathbb{R}^n$. Then for $\la(x)=\frac{2m}{\tau+|x|^2}$, $(\Bbb{R}^n,g_0,\nabla u,\frac{m}{u},\la)$ is a nontrivial  gradient $h$-almost  Ricci soliton. On the other hand, the vector field
\begin{equation}\label{ex-nongrad}
X(x_1,\ldots, x_n)=\left(x_nx_1,x_nx_2,\ldots,x_nx_{n-1},\frac{x_n^2}{2}\right)
\end{equation}
is nonhomothetic and conformal and so defines  a nontrivial structure of $h$-almost Ricci soliton on $(\mathbb{R}^n,g_0)$, where $x_1, \ldots, x_n$ are the canonical coordinates in $\Bbb{R}^n$.

\begin{example}\label{ex-Einst}
 Suppose that $(\mathbb{F},\langle,\rangle)$ is an $(n-1)(n\geq 3)$-dimensional complete  Einstein manifold with $Ric_{\langle,\rangle}= -(n-2)l\langle,\rangle$, $l\geq 0$. Let $k$ be a negative constant  and define $f: \mathbb{R}\to\mathbb{R}^{+} $  by
\begin{equation}\label{f''}
f(t)=\frac{A}{\sqrt{-k}}\sinh(\sqrt{-k}t)+\sqrt{\frac {A^2+l}{-k}}\cosh(\sqrt{-k}t),
\end{equation}
where $A\neq 0$ is a constant.
Let $M^n=\mathbb{R}\times_f\Bbb{F}$ denote the $f$-warped product of  ${\mathbb R}$  and  $(\Bbb{F},\langle,\rangle)$. Namely, the $n$-dimensional, smooth product manifold  $M^n= {\mathbb R}\times\Bbb{F}$ is endowed with the metric
\begin{equation*}
g=dt\otimes dt + f(t)^2\langle,\rangle,
\end{equation*}
where $t$ is a global parameter of $\mathbb{R}$. It follows from Lemma 1.1 in \cite{prrs} that $(M^n,g)$ is Einstein with $Ric_g=(n-1)kg$.
Since $(M^n, g)$ is complete and $k<0$, there is a function $u$ on $(M^n,g)$ without critical points  satisfying $\nabla^2u+kug=0$ (see Theorem D in \cite{masahiko}). So, for each smooth function $h:\mathbb{R}\to\mathbb{R}$, if $\la=(n-1)k-hku$, then $(M^n,g,\nabla u,h,\la)$ is a  nontrivial structure of gradient $h$-almost  Ricci soliton on $(M^n,g)$.
\end{example}

Now we are ready to prove the main results of this paper.
\vskip0.2cm
{\it Proof of Theorem \ref{thmA}.}
For a symmetric $(0,2)$-tensor $T$ on  $(M^n,g)$, we  denote by $\overset\circ{T}$ the traceless tensor associated with  $T$, that is, $\overset\circ{T}=T-\frac{\mathrm{tr}(T)}n g$.
Let $Ric$ and $R$ be the Ricci tensor and the scalar curvature of $M^n$, respectively. Instead of using the constancy assumption on $R$, we shall prove our Theorem \ref{thmA} under the weaker condition that $\langle X, \nabla R\rangle\leq 0$ on $M^n$, if $h>0$, or $\langle X, \nabla R\rangle\geq 0$, if $h<0$. Setting $S=\frac 12 {\mathcal L}_X g$, we have from \eqref{eqfund1h} that
\be\label{Ric1}
\overset\circ{Ric} = -h\overset\circ{S},
\en
Taking $T=\overset\circ{Ric}, \,\varphi=1$ and $Z=X$ in Lemma \ref{lem1}, we obtain
\be\label{divRic1}
\mathrm{div}(\overset\circ{Ric}(X))=(\mathrm{div} \overset\circ{Ric})(X)+\langle\nabla X, \overset\circ{Ric}\rangle.
\en
It follows from the second contracted Bianch identity that
\be\label{divRic2}
(\mathrm{div} \overset\circ{Ric})(X)=\frac{n-2}{2n}\langle \nabla R, X\rangle.
\en
By a straightforward computation we infer
\be\label{intRic}
\langle\nabla X, \overset\circ{Ric}\rangle = \langle\overset\circ{Ric}, \overset\circ{S}\rangle=- h|\overset\circ{S}|^2.
\en
Combining \eqref{divRic1}-\eqref{intRic}, we have
\be\label{divRic3}
\mathrm{div}(\overset\circ{Ric}(X))=\frac{n-2}{2n}\langle \nabla R, X\rangle- h|\overset\circ{S}|^2.
\en
Integrating on $M^n$, we know that $\overset\circ{S}=0$. Hence $X$ is a nonhomothetic  conformal vector field and from \eqref{Ric1} $M^n$ is Einstein.
Let us set
\be\label{conformal}
{\mathcal L}_X g= 2 \rho g.
\en
where, by \eqref{eqfund1h}
\be\label{rho}
\rho =\frac{\mathrm{div} X}n=\frac 1h\left(\lambda -\frac Rn\right).
\en
Moreover, the conformal factor $\rho$ satisfies the following equation (see for example p.28 in \cite{yano}):
\be\label{eqThr}
\nabla^2\rho=-\frac{R}{n(n-1)}\rho g.
 \en
 Since $\rho$ is not constant, we conclude that $\frac{R}{n-1}$ is a nontrivial eigenvalue  of the Laplacian and therefore $R>0$.
 Consequently, $M^n$ is isometric to a sphere ${\mathbb S}^n(r)$, where $r=\sqrt{n(n-1)/R}$ is the radius of the sphere (Cf. \cite{obata}). It then follows  that $\rho$ is an eigenfunction corresponding to the first eigenvalue $\lambda_1=R/(n-1)$ of the sphere ${\mathbb S}^n(r)$.
Setting
\be\label{z}
u = - \frac {n(n-1)}R \rho,
\en
we obtain
\be
\frac 12 {\mathcal L}_{\nabla u} g = \nabla^2 u =-\frac{n(n-1)}{R}\nabla^2\rho= \rho g = \frac 12 {\mathcal L}_X g.
\en
This completes the proof of Theorem \ref{thmA}.
\qed
\vskip0.2cm
Before proving Theorem \ref{thmConstEin}, we shall need  the following lemma.
\begin{lemma}\label{pro princ}
For a gradient $(-\frac{m}{u})$-almost Ricci soliton $(M^n, g, \nabla u, -\frac{m}{u},\la)$, the following is valid
\begin{equation}\label{eqPprinc}
d\Big(\frac{n-2}{m}u^2\lambda - u\Delta u - (m-1)|\nabla u|^2\Big) - \frac{m+n-2}{m}\lambda du^2=0
\end{equation}
\end{lemma}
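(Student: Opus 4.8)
The plan is to start from the gradient equation \eqref{eqfund2h} with $h=-\frac{m}{u}$, namely $Ric-\frac{m}{u}\nabla^2 u=\lambda g$, and take its divergence, exploiting the contracted Bianchi identity together with the divergence formulas collected in \eqref{FG}. Multiplying through by $u$ to clear denominators, I would rewrite the soliton equation as $u\,Ric-m\nabla^2 u=\lambda u g$, which is more convenient for applying the product rules in \eqref{FG} since the coefficient functions are now smooth even where $u$ could otherwise cause trouble. The idea is that the single tensorial identity obtained by taking $\mathrm{div}$ of this equation, after tracing appropriately, will yield a $(0,1)$-tensor equation whose exact-form structure is precisely \eqref{eqPprinc}.

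First I would compute $\mathrm{div}(u\,Ric)$. Using the first line of \eqref{FG}, $\mathrm{div}(u\,Ric)=u\,\mathrm{div}\,Ric+Ric(\nabla u,\cdot)$, and by the second contracted Bianchi identity $\mathrm{div}\,Ric=\tfrac12\,dR$, so $\mathrm{div}(u\,Ric)=\tfrac{u}{2}\,dR+Ric(\nabla u,\cdot)$. Next, $\mathrm{div}(m\nabla^2 u)=m\big(Ric(\nabla u,\cdot)+d\Delta u\big)$ by the fourth line of \eqref{FG}. Finally $\mathrm{div}(\lambda u g)=d(\lambda u)$ using $\mathrm{div}(\varphi g)=d\varphi$. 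Setting these equal according to the divergence of the rewritten soliton equation gives a relation among $dR$, $Ric(\nabla u,\cdot)$, $d\Delta u$ and $d(\lambda u)$. The scalar curvature term $dR$ is then eliminated by solving the traced soliton equation for $R$: taking the trace of $u\,Ric-m\nabla^2 u=\lambda u g$ yields $uR-m\Delta u=n\lambda u$, hence $R=\tfrac{1}{u}(n\lambda u+m\Delta u)$, and differentiating produces $dR$ in terms of $d\lambda$, $d\Delta u$ and $du$. The term $Ric(\nabla u,\cdot)$ is handled by contracting the soliton equation against $\nabla u$, giving $Ric(\nabla u,\cdot)=\lambda\,du+\tfrac{m}{u}\nabla^2 u(\nabla u,\cdot)=\lambda\,du+\tfrac{m}{2u}\,d|\nabla u|^2$, where the third line of \eqref{FG} has been used to rewrite $\nabla^2 u(\nabla u,\cdot)=\tfrac12 d|\nabla u|^2$.

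Substituting all of these into the divergence identity and collecting terms, every remaining quantity is an exact differential or a multiple of $\lambda\,du^2=\lambda\,d(u^2)$. The expected outcome is that the combination $\frac{n-2}{m}u^2\lambda-u\Delta u-(m-1)|\nabla u|^2$ emerges inside a single $d(\cdot)$, with the leftover non-exact piece being exactly $-\frac{m+n-2}{m}\lambda\,du^2$, which is \eqref{eqPprinc}. The main obstacle I anticipate is purely bookkeeping: tracking the numerical coefficients after the $dR$- and $Ric(\nabla u,\cdot)$-substitutions, and correctly recognizing which terms assemble into $d(u\Delta u)$, $d|\nabla u|^2$ and $d(u^2\lambda)$ versus the residual $\lambda\,d(u^2)$ term that cannot be absorbed into the exact differential. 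In particular one must be careful that $d(u\Delta u)=\Delta u\,du+u\,d\Delta u$ and that the $m\,d\Delta u$ contributions from the two sources combine with the right sign; a sign error here is the most likely pitfall, so I would verify the final coefficients by checking the identity dimensionally against the special case where $u$ is constant (both sides must vanish).
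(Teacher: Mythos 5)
Your proposal is correct and follows essentially the same route as the paper's own proof: taking the divergence of the soliton equation, invoking the contracted Bianchi identity with $R=n\lambda+\frac{m}{u}\Delta u$, the formula $\mathrm{div}\,\nabla^2u=Ric(\nabla u,\cdot)+d\Delta u$, and the substitutions $Ric(\nabla u,\cdot)=\lambda\,du+\frac{m}{2u}d|\nabla u|^2$ via $\nabla^2u(\nabla u,\cdot)=\frac12 d|\nabla u|^2$ --- the paper merely keeps the factor $-\frac{m}{u}$ and applies $\mathrm{div}(\varphi T)=\varphi\,\mathrm{div}\,T+T(\nabla\varphi,\cdot)$ instead of pre-multiplying by $u$, which is the same Leibniz bookkeeping. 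Carrying out your plan and multiplying the resulting one-form identity by $\frac{2u}{m}$ does yield $\frac{n-2}{m}u^2d\lambda-d(u\Delta u)-(m-1)d|\nabla u|^2-\lambda\,du^2=0$, which is exactly \eqref{eqPprinc}.
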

{\it Proof of Lemma \ref{pro princ}.}
In this case, we have
\begin{equation*}
Ric-\frac{m}{u}\nabla^2u-\lambda g=0\ \ \ \mbox{and}\ \ \ R=n\lambda+\frac{m}{u}\Delta u.
\end{equation*}
Using the formulas indicated in \eqref{FG} we get
\begin{eqnarray*}
0&=&\mathrm{div} Ric - \frac{m}{u}\mathrm{div}\nabla^2u + \nabla^2u\Big(\nabla\big(-\frac{m}{u}\big),\cdot\Big)-d\lambda\\
&=&\frac{1}{2}d\big(n\lambda+\frac{m}{u}\Delta u\big)-\frac{m}{u}\big(Ric(\nabla u,\cdot)+d\Delta u\big)+\frac{m}{u^2}\nabla^2u(\nabla u,\cdot)-d\lambda\\
&=&\frac{n-2}{2}d\lambda + \frac{1}{2}d(\frac{m}{u}\Delta u) -\frac{m}{u}d\Delta u -\frac{m}{u}\big(\lambda du +\frac{m}{u}\nabla^2u(\nabla u,\cdot)\big) +\frac{m}{2u^2}d|\nabla u|^2\\
&=&\frac{n-2}{2}d\lambda - \frac{m}{2u^2}(ud\Delta u+\Delta udu) -\frac{m}{u^2}\lambda udu -\frac{m^2}{2u^2}d|\nabla u|^2 +\frac{m}{2u^2}d|\nabla u|^2\\
&=&\frac{n-2}{2}d\lambda -\frac{m}{2u^2}\lambda du^2 - \frac{m}{2u^2}d(u\Delta u) -\frac{m^2}{2u^2}d|\nabla u|^2 +\frac{m}{2u^2}d|\nabla u|^2.
\end{eqnarray*}
Multiplying by $\frac{2u^2}{m}$ we obtain
\begin{eqnarray*}
\frac{n-2}{m}u^2d\lambda -\lambda du^2 - d(u\Delta u) - (m-1)d|\nabla u|^2=0,
\end{eqnarray*}
which is \eqref{eqPprinc}.
\qed
\vskip0.2cm
{\it Proof of Theorem \ref{thmConstEin}.}

$i)$ Making $\lambda$ a constant in \eqref{eqPprinc} we get \eqref{eqthmCE}.

$ii)$ Since $M$ is compact, we can take $p, q\in M$ such that
\begin{equation}\label{eqMm}
u(p)=\max_{x\in M} u(x), \ u(q)=\min_{x\in M} u(x).
\end{equation}
Then we have
\begin{equation}\label{consMm}
u(p)>0,\ u(q)>0,\ \nabla u(p)=\nabla u(q)=0,\ \Delta u(p)\leq 0\leq\Delta u(q).
\end{equation}
Let us consider firstly the case $\lambda=0$. In this case, we have from \eqref{eqthmCE} that
\begin{equation}\label{eqAuxMm}
 u\Delta u + (m-1)|\nabla u|^2 = \mu,
\end{equation}
which, combining with \eqref{consMm}, gives
\begin{equation*}
0\geq u(p)\Delta u(p)= \mu =u(q)\Delta u(q)\geq 0,
\end{equation*}
and so, $\mu=0.$ Consequently, we get from \eqref{eqAuxMm} that
\begin{equation}\label{eq=0}
 u\Delta u + (m-1)|\nabla u|^2 =0.
\end{equation}
Integrating  on $M$, one  obtains
\begin{equation}
(m-2)\int_{M} |\nabla u|^2=0.
\end{equation}
When $m\neq 2$, the above equation tells us that $u$ is a constant. On the other hand, when $m=2$,
we have from \eqref{eq=0} that
\begin{eqnarray*}
\frac{1}{2}\Delta u^2 = u\Delta u+|\nabla u|^2=0,
\end{eqnarray*}
which again implies that $u$ is a constant.

Consider now the case $\lambda <0$. Observe that
\begin{eqnarray*}
\lambda u^2(p)\geq \mu \geq \lambda u^2(q),
\end{eqnarray*}
which can be easily deduced from \eqref{eqthmCE} and \eqref{consMm}. Thus, we have  $u^2(p)\leq u^2(q)$. Consequently, $u$ is a constant.
\qed

\section{The case noncompact}

The proof of  Theorem \ref{thmA} shows that any condition that renders $\mathring{S}=0$ in the equation \eqref{divRic3} will entail that the manifold is  Einstein and the vector field $X$ is conformal, with conformal factor satisfying \eqref{eqThr}. This allows us to  get the following result.

\begin{theorem}\label{thmNoncompact}
Let $(M^n, g, X, h, \la)$ be a nontrivial noncompact $h$-almost Ricci soliton, with $h$ having defined signal. Suppose that $\mathcal{L}_{X}R \leq 0$  and $|\mathring{Ric}(X)|$ lies in $\emph{L}^{1}(M^n).$ Then $(M^n,g)$ is a Einstein manifold with non positive scalar curvature $R.$ Moreover:
\begin{enumerate}
\item\label{2_thmB} If $R=0$, then $(M^n, g)$  is isometric to the Euclidean space $(\mathbb{R}^n, g_0)$.
\item\label{1_thmB} If $R<0$, then $\mathcal{L}_Xg=\mathcal{L}_{\nabla u}g$ with potential function $u$ given by \eqref{z} and $M^n$  is isometric to a  hyperbolic space provided that $u$ has only one critical point, or a pseudo-hyperbolic space provided that $u$ has no critical point.
\end{enumerate}

\end{theorem}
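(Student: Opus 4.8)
The plan is to recycle the divergence identity \eqref{divRic3} obtained in the proof of Theorem~\ref{thmA} and to replace the use of the divergence theorem (unavailable in the noncompact setting) by an $L^1$-type Stokes argument. Assume first that $h>0$; the case $h<0$ is entirely analogous after reversing inequalities. Consider the vector field $V:=\mathring{Ric}(X)$. From \eqref{divRic3}, recalling $\langle\nabla R,X\rangle=\mathcal{L}_XR$, we have
\be\no
\mathrm{div} V &=& \frac{n-2}{2n}\langle\nabla R,X\rangle-h|\mathring{S}|^2.
\en
Under the hypothesis $\mathcal{L}_XR\leq 0$, and since $\frac{n-2}{2n}>0$ and $h>0$, both summands on the right are nonpositive, so $\mathrm{div} V\leq0$ has a fixed sign on $M^n$. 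First I would invoke the standard fact that on a complete noncompact manifold a vector field $V$ with $|V|\in L^1(M^n)$ and $\mathrm{div} V$ of constant sign must satisfy $\mathrm{div} V\equiv0$. Since $|\mathring{Ric}(X)|\in L^1(M^n)$ by assumption, this forces $\mathrm{div} V\equiv 0$, whence each nonpositive summand vanishes: $\mathcal{L}_XR\equiv0$ and $|\mathring{S}|^2\equiv0$. Thus $\mathring{S}=0$, so $X$ is a conformal, and by nontriviality nonhomothetic, vector field, while \eqref{Ric1} gives $\mathring{Ric}=-h\mathring{S}=0$, i.e. $(M^n,g)$ is Einstein.

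Exactly as in the compact case, I would then write $\mathcal{L}_Xg=2\rho g$ with $\rho=\frac1h(\lambda-\frac Rn)$ nonconstant, so that by \eqref{eqThr} the factor satisfies the Obata-type equation
\be\no
\nabla^2\rho &=& -\frac{R}{n(n-1)}\rho\, g.
\en
Because $M^n$ is Einstein of dimension $n\geq3$, $R$ is constant, and the existence of a nonconstant solution of this equation on a complete manifold is governed by the sign of $R$. If $R>0$, then by Obata's theorem \cite{obata} $(M^n,g)$ would be isometric to a round sphere and hence compact, contradicting the hypothesis; therefore $R\leq0$, which already yields the asserted bound on the scalar curvature.

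It remains to identify the model spaces, and here I would appeal to the classification of complete manifolds carrying a nonconstant special concircular function $\rho$ with $\nabla^2\rho=-\frac{R}{n(n-1)}\rho g$ (Kanai \cite{masahiko}), the type of model being dictated by the number of critical points of the potential. When $R=0$ the equation reduces to $\nabla^2\rho=0$; combined with Ricci-flatness and the presence of the nonhomothetic conformal field $X$ this forces $(M^n,g)$ to be flat and, being complete, isometric to $(\mathbb{R}^n,g_0)$, proving item (1). When $R<0$, set $u=-\frac{n(n-1)}{R}\rho$ as in \eqref{z}; then $\frac12\mathcal{L}_{\nabla u}g=\nabla^2u=\rho g=\frac12\mathcal{L}_Xg$, so the soliton is gradient with potential $u$, and $u$ solves $\nabla^2u+ku g=0$ with $k=\frac{R}{n(n-1)}<0$. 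By the cited classification, $(M^n,g)$ is isometric to the hyperbolic space when $u$ has a single critical point and to a pseudo-hyperbolic space when $u$ has none, which is item (2).

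The computational core is immediate once \eqref{divRic3} is in hand; the genuinely delicate points are two. The first is the passage from \emph{constant sign} to \emph{identically zero} for $\mathrm{div} V$, which is exactly where the $L^1$ hypothesis on $|\mathring{Ric}(X)|$ enters and substitutes for compactness, since the divergence theorem fails on open manifolds without such integrability control. The second, and the main obstacle, is the rigidity step: ruling out $R>0$ via Obata and, for $R\leq0$, matching the sign of $R$ and the number of critical points of the concircular potential to the Euclidean, hyperbolic and pseudo-hyperbolic models. I expect the case $R=0$ to be the most delicate, since the bare equation $\nabla^2\rho=0$ by itself only splits $M^n$ as a product, so one must bring in the full conformal field to upgrade Ricci-flatness to flatness and conclude $(\mathbb{R}^n,g_0)$.
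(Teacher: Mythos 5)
Your proposal is correct and follows essentially the same route as the paper: the divergence identity \eqref{divRic3} plus the sign hypotheses give $\mathrm{div}\big(\mathring{Ric}(X)\big)$ of constant sign, the $L^1$ hypothesis and the result you call a ``standard fact'' (which is exactly Proposition 1 of \cite{caminha}, the paper's citation) force it to vanish, and the rigidity conclusions then come from the same sources the paper invokes (Obata to exclude $R>0$, and Kanai's Theorem G \cite{masahiko} / Tashiro \cite{Tashiro} for the $R=0$ and $R<0$ models via the critical points of $u$). Your handling of the case $h<0$ by reversing the inequality on $\mathcal{L}_XR$ is in fact the careful reading of the hypothesis (matching the remark in the proof of Theorem \ref{thmA}), and your caution that $\nabla^2\rho=0$ alone only splits the manifold, so the full nonhomothetic conformal field is needed in the $R=0$ case, is precisely what the citation to item (ii) of Theorem G is doing for the paper as well.
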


\begin{proof}
Suppose that $\mathcal{L}_{X}R\leq0$. Then, the equation \eqref{divRic3} gives that $\mathrm{div}\big(\mathring{Ric}(X)\big)\leq0.$ Since $\big|\mathring{Ric}(\nabla u|\big)$ lies in $\emph{L}^{1}(M^n)$ we can use Proposition $1$ of \cite{caminha} to deduce that $\mathrm{div}\big(\mathring{Ric}(X)\big)=0.$ Thus, $M^n$ is Einstein and $X$ is a nonhomothetic conformal vector field. Moreover, we can assume that the equations \eqref{conformal}-\eqref{eqThr} hold. Furthermore, we have $R\leq0$, since $M^n$ is noncompact. Moreover:

1) If $R=0$, we conclude from item (ii) of   Theorem G in \cite{masahiko} that $(M^n, g)$ is isometric to the $n$-dimensional Euclidean space $(\mathbb{R}^n, g_0)$.

2) If $R<0$, we can replace $X$ by $\nabla u$, where $u$ is given by \eqref{z}. In particular, we obtain a complete classification by using Theorem 2 of \cite{Tashiro} or Theorem G of \cite{masahiko} and analyzing the critical point of the potential function $u$. In short, for $R=n(n-1)k$, $(M^n,g)$  is isometric to the hyperbolic space $(\Bbb{H}^n,-(1/k)g_0)=\Bbb{R}\times_{f}\Bbb{R}^{n-1},$ $f(t)=e^{\pm\sqrt{-k}t}$, provided that the potential function $u$ has only one critical point, or a pseudo-hyperbolic space, that is, a warped product $\Bbb{R}\times_{f}\Bbb{F}$, where $f$ is a solution of  $f''+kf=0$, and $\Bbb{F}$ is a complete Einstein manifold, provided that $u$ has no critical point. Finally, we mention that any of the above two cases can  occur. The first one is  contained in Example \ref{ex2} and the second one is Example \ref{ex-Einst}.
\end{proof}

\end{document}